\def\bigskip{\vspace{14pt}}
\colorlet{darkishRed}{red!80!black}
\colorlet{darkishBlue}{blue!60!black}
\colorlet{darkishGreen}{green!60!black}
\renewcommand{\PrintDOI}[1]{\doi{#1}}
\let\setminus=\smallsetminus
\pgfplotsset{compat=1.18}
\let\setminus=\smallsetminus
\renewcommand{\leq}{\leqslant}
\def\namedlabel#1#2{\begingroup
   \def\@currentlabel{#2}%
   \label{#1}\endgroup
}
\let\rho=\varrho
\let\phi=\varphi
\DeclareMathOperator{\interior}{int}
\newcommand{ \N } { \mathbb{N} }
\newcommand{\abs}[1]{\lvert#1\rvert}
\def\calCommandfactory#1{%
   \expandafter\def\csname c#1\endcsname{\mathcal{#1}}}
\def\frakCommandfactory#1{%
   \expandafter\def\csname frak#1\endcsname{\mathfrak{#1}}}
\newcounter{ctr}
  \edef\X{\@Alph\c@ctr}
  \edef\Y{\@alph\c@ctr}
\newtheorem{theorem}{Theorem}[section] 
\newtheorem{mainresult}{Theorem} 
\crefname{mainresult}{Theorem}{Theorems}
\newtheorem{LEM}[theorem]{Lemma}
\theoremstyle{definition}
\crefname{claim}{Claim}{Claims}
\newlist{thmlist}{enumerate}{1}
\setlist[thmlist]{label=(\roman{thmlisti}), ref=\thethm.(\roman{thmlisti}),noitemsep}
\def\td{tree-decom\-po\-sition}
\def\tds{tree-decom\-po\-sitions}
\theoremstyle{definition}
\theoremstyle{remark}
\title[Refining tree-decompositions so that they display the \MakeLowercase{k}-blocks]{Refining tree-decompositions so that they display the \MakeLowercase{k}-blocks} 
\author{Sandra Albrechtsen}
\address{Universität Hamburg, Department of Mathematics, Bundesstraße 55 (Geomatikum), 20146 Hamburg, Germany}
\email{sandra.albrechtsen@uni-hamburg.de}
\subjclass{05C83, 05C40, 05C05, 05C63}
\keywords{Tree-decomposition, $k$-block, profile, canonical, locally finite graphs.}
\begin{document}

\maketitle

\begin{abstract}
    Carmesin and Gollin proved that every finite graph has a canonical \td\ $(T, \cV)$ of adhesion less than $k$ that efficiently distinguishes every two distinct $k$-profiles, and which has the further property that every separable $k$-block is equal to the unique part of $(T, \cV)$ in which it is contained. 
    
    We give a shorter proof of this result by showing that such a \td\ can in fact be obtained from any canonical tight \td\ of adhesion less than $k$. For this, we decompose the parts of such a \td\ by further \tds. As an application, we also obtain a generalization of Carmesin and Gollin's result to locally finite graphs.
\end{abstract}

\section{Introduction}

A \emph{$k$-block} in a graph $G$, for some $k \in \N$, is a maximal set of at least $k$ vertices no two of which can be separated in~$G$ by removing fewer than $k$ other vertices. 
For large $k$, the $k$-blocks of a graph are examples of highly connected substructures.

Another example of such a substructure, one which also indicates high local connectivity but is of a more fuzzy kind than blocks, is that of a tangle. Tangles were introduced by Robertson and Seymour in \cite{GMX}.
Formally, a \emph{$k$-tangle} in a graph $G$ is a consistent orientation of all the separations $\{A,B\}$ of $G$ of order less than $k$, as $(A,B)$ say, such that no three such oriented separations together cover the whole graph by the subgraphs induced on their `small sides' $A$. 

Since $k$-blocks cannot be separated by deleting fewer than $k$ vertices, they induce an orientation of every separation of order less than~$k$: towards that side which contains the $k$-block. Although these orientations are consistent in that they all point towards the same $k$-block, they need not be tangles if the $k$-block is too small. But they are \emph{$k$-profiles}: a common generalization of tangles and blocks, in that every $k$-tangle is a $k$-profile, and every $k$-block induces a $k$-profile in the way described above.

Robertson and Seymour \cite{GMX} proved that every finite graph has a \td\ of adhesion less than $k$ that distinguishes all its $k$-tangles, in that they `live' in distinct parts of that \td. Carmesin, Diestel, Hamann and Hundertmark \cite{CDHH13CanonicalAlg} generalized this result by showing that every finite graph has a \td\ of adhesion less than $k$ that distinguishes all its regular $k$-profiles. In addition, the \td\ they constructed has the additional property that it is \emph{canonical}: it is invariant under the automorphisms of the graph.

Carmesin and Gollin improved this result even further and showed that every finite graph $G$ admits a \td\ $(T, \cV)$ as above which additionally displays the structure of the $k$-blocks in $G$, in that every $k$-block in $G$ which can be isolated by any \td\ at all\footnote{We call such $k$-blocks \emph{separable}.} appears as a bag of $(T, \cV)$:

\begin{mainresult}{\cite{CG14:isolatingblocks}*{Theorem 1}}\label{thm:DisplayingkBlocksFinitek}
    Every finite graph $G$ has a canonical \td\ $(T, \cV)$ of adhesion less than $k$ that efficiently distinguishes every two distinct regular $k$-profiles, and which has the further property that every separable $k$-block is equal to the unique bag of $(T, \cV)$ that contains it.
\end{mainresult}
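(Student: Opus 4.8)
The plan is to begin with a canonical tight \td\ $(T,\cV)$ of $G$ of adhesion less than $k$ that efficiently distinguishes all regular $k$-profiles, and to refine it -- one part at a time -- until every separable $k$-block has become a bag; the existence of such a tight starting \td\ I assume. The key observation that makes a part-by-part refinement possible is that the separable $k$-blocks already live in pairwise distinct parts of $(T,\cV)$. Indeed, every $k$-block $b$ induces a regular $k$-profile $P_b$, namely the one orienting each separation of order less than $k$ towards the side containing $b$, and two distinct $k$-blocks induce distinct profiles: if $b_1$ and $b_2$ lay on the same side of every separation of order less than $k$, then no vertex of $b_1$ could be separated from a vertex of $b_2$ by fewer than $k$ vertices, so $b_1\cup b_2$ would be a set of at least $k$ pairwise $k$-inseparable vertices and maximality would force $b_1=b_2$. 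Since $(T,\cV)$ efficiently distinguishes all regular $k$-profiles, the profiles $P_b$ live at pairwise distinct nodes; thus each part $V_t$ contains at most one separable $k$-block $b$ with $P_b$ living at $t$, and for that block $b\subseteq V_t$.

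It therefore suffices to produce, for each node $t$, a canonical \td\ $(T_t,\cV_t)$ of the torso $G_t$ of $V_t$, of adhesion less than $k$, such that (a) every adhesion set $V_s\cap V_t$ of $(T,\cV)$ at $t$ lies inside a single bag of $(T_t,\cV_t)$, and (b) if a separable $k$-block $b$ lives at $t$, then $b$ is a bag of $(T_t,\cV_t)$. Property (a) is precisely what is needed to glue the \tds\ $(T_t,\cV_t)$ into $(T,\cV)$ along $T$, replacing each node $t$ by the tree $T_t$; this yields a single \td\ $(T',\cV')$ of $G$ refining $(T,\cV)$. Refining adds separations but retains all those of $(T,\cV)$, so $(T',\cV')$ still efficiently distinguishes all regular $k$-profiles; its adhesion stays below $k$ because both the new edges inside each $T_t$ and the old edges of $T$ have adhesion less than $k$; and it is canonical as soon as the assignment $t\mapsto(T_t,\cV_t)$ is equivariant under $\Aut(G)$. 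Finally, once $b$ is a bag of $(T',\cV')$ it is automatically the \emph{unique} bag containing it: the nodes whose bag contains all of $b$ form a subtree, and an edge within that subtree would have its adhesion set, of size less than $k$, contain all of $b$, of size at least $k$, which is impossible. Together with (b) this gives the theorem.

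The heart of the matter, and the step I expect to be the main obstacle, is the construction of $(T_t,\cV_t)$: one must isolate $b$ inside the torso $G_t$, canonically and with adhesion less than $k$. Separability of $b$ provides, for each $v\in V_t\setminus b$, a separation of $G$ of order less than $k$ separating $v$ from $b$; the difficulty is to turn these into separations of the torso $G_t$ of the same order that moreover do not split the adhesion sets $V_s\cap V_t$, so that (a) can be maintained. This is exactly where the tightness of $(T,\cV)$ should do its work, guaranteeing that such separations are nested with the separations of $(T,\cV)$ and hence restrict to honest separations of $G_t$ -- in effect that $b$ is again a separable $k$-block of $G_t$ while the adhesion sets stay on its $b$-side. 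Given this, $(T_t,\cV_t)$ is obtained by organizing the tight separations of $G_t$ that separate some vertex from $b$ into a canonical nested family whose common $b$-side is exactly $b$, via the standard canonical uncrossing construction inside $G_t$. Controlling the adhesion of this family and verifying the boundary condition (a) throughout is the technical core of the argument.
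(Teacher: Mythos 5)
Your high-level plan is the same as the paper's: start from a canonical tight \td\ of adhesion less than $k$ that efficiently distinguishes the regular $k$-profiles, note that distinct separable $k$-blocks lie in distinct parts, refine each such part so that its block becomes a bag, and glue. Your reduction steps (distinctness of the induced profiles, preservation of the distinguishing separations under refinement, and uniqueness of the bag equal to $b$ because $|b|\geq k$ exceeds every adhesion set) are all fine. But the proof has a genuine gap exactly where you flag it: you never actually construct the local \td\ $(T_t,\cV_t)$. You reduce the theorem to producing, canonically and with adhesion less than $k$, a decomposition of the part at $t$ that isolates $b$ and keeps each adhesion set in one bag, and then you say this should follow from ``the standard canonical uncrossing construction'' applied to the separations witnessing separability, restricted to the torso. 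That is not a proof: uncrossing a family of crossing separations while simultaneously keeping the orders below $k$, keeping the result nested with $\sigma_t$, keeping the common $b$-side equal to $b$ exactly, and keeping everything $\Aut(G)$-invariant is precisely the hard content of Carmesin--Gollin's original argument, and nothing in your sketch shows it goes through. Passing to the torso adds a further unaddressed difficulty, since a separation of $G$ of order less than $k$ need not induce a separation of the torso of the same order once the adhesion sets are made complete.

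The missing idea, which is the paper's \cref{lem:StarWhoseInteriorIsABlock}, is that no uncrossing (and no torso) is needed. One works with separations of $G$: for each component $C$ of $G-b$ not contained in a strict small side of $\sigma_t$, one takes the separation $(X_C,Y_C)$ whose separator is $N_G(C)$ --- of order less than $k$ by the separability criterion of \cref{lem:CharacterizationOfSeparableBlocks} --- and whose small side consists of $C$ together with the strict small sides of those members of $\sigma_t$ meeting $C$. Tightness of $\sigma_t$ is what guarantees that each such small side $A\setminus B$ has $N_G(A\setminus B)=A\cap B\subseteq V(C)\cup N_G(C)$ (a tight separation's small side contains a component of $G-(A\cap B)$ with full neighbourhood, and that component must lie in $C$), so $(X_C,Y_C)$ really is a separation with separator $N_G(C)$; the same argument shows distinct components cannot meet the same small side of $\sigma_t$, so the $(X_C,Y_C)$ form a star with interior exactly $b$, and canonicity is automatic because components of $G-b$ are permuted by automorphisms. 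Replacing the node $t$ by a star of nodes indexed by this family then yields the refinement directly. Without this (or an equally explicit) construction, your argument establishes only the reduction, not the theorem.
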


They also proved the following related result:

\begin{mainresult}{\cite{CG14:isolatingblocks}*{Theorem 2}}\label{thm:DisplayingkBlocksFiniteMax}
    Every finite graph $G$ has a canonical \td\ $(T, \cV)$ that efficiently distinguishes every two distinct maximal robust profiles, and which has the further property that every separable block inducing a maximal robust profile is equal to the unique bag of $(T, \cV)$ that contains it.
\end{mainresult}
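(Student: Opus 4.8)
The plan is to follow the strategy announced in the abstract and to run, \emph{mutatis mutandis}, the argument that yields \cref{thm:DisplayingkBlocksFinitek}: I would begin with any canonical tight \td\ $(T, \cV)$ of $G$ that efficiently distinguishes every two distinct maximal robust profiles---whose existence is provided by the general theory of canonically distinguishing profiles---and then refine it by decomposing each of its parts individually.

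The first observation is that the blocks to be displayed already sit in well-defined places. If $B$ is a separable block inducing a maximal robust profile $P$, then $P$ consistently orients every separation induced by $(T, \cV)$, and $B$ lies on the large side of each such oriented separation; hence $B$ is contained in the \emph{unique} part $V_t$ towards which $P$ points. Since $(T, \cV)$ distinguishes all maximal robust profiles and distinct blocks induce distinct profiles, each part contains at most one such block. It therefore suffices to refine each part $V_t$ so that the separable block inducing a maximal robust profile contained in $V_t$, should one exist, becomes a bag, and then to glue these local refinements back into $(T, \cV)$.

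The technical core is this refinement step. For a part $V_t$ whose torso contains such a block $B$, I would construct a \td\ of the torso of $V_t$ that has $B$ as a bag and that is canonical with respect to the setwise stabiliser of $V_t$ in $\Aut(G)$; here the tightness of $(T, \cV)$ is what lets separations of the torso be read off from separations of $G$, so that $B$ can be carved out by nested separations respecting the ambient structure. Substituting each local \td\ into the corresponding node of $T$ then produces a \td\ $(T', \cV')$ that refines $(T, \cV)$. Because the base \td\ is canonical and the local refinements are defined canonically---hence compatibly across parts that automorphisms interchange---the refined \td\ $(T', \cV')$ is again canonical.

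Finally I would verify the two remaining properties. Efficiency is inherited from the base \td. Every two distinct maximal robust profiles are already efficiently distinguished by a separation of $(T, \cV)$, and this separation survives in $(T', \cV')$, so they remain efficiently distinguished. And by construction every separable block inducing a maximal robust profile now equals a bag of $(T', \cV')$. I expect the genuine obstacle to lie in the refinement step: one must produce, inside a single part and canonically with respect to its stabiliser, a \td\ that isolates $B$ as a bag, and one must check that nestedness with the base separations survives the insertion of the local \tds\ into $T$. Making the choice of isolating separations canonical is, I suspect, where the real work lies.
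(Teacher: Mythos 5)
Your overall strategy is the right one and is essentially the paper's: start from a canonical tight \td\ that efficiently distinguishes all maximal robust profiles (whose existence the paper imports from the literature, and which for finite graphs it obtains as a special case of the locally finite version), observe that each separable block inducing such a profile lies in a unique part, refine each affected part locally so that the block becomes a bag, and glue the local decompositions back in. Efficiency and canonicity are then inherited exactly as you say. However, you have left the actual mathematical content of the theorem as an acknowledged gap: the construction, canonical in the required sense, of a local decomposition of a part that isolates the block as a bag while remaining nested with the separations of the base \td. This is precisely what the paper's \cref{lem:StarWhoseInteriorIsABlock} supplies, and it is where separability and tightness are genuinely used; without it your argument is an outline rather than a proof.

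For the record, the missing construction is the following. Let $b$ be the block contained in the bag $V_t$ and let $\sigma_t$ be the star of separations induced by the edges of $T$ at $t$, so that $b \subseteq \interior(\sigma_t)$. For each component $C$ of $G-b$ that is not contained in the strict small side of some member of $\sigma_t$, one forms a separation $(X_C, Y_C)$ with separator $N_G(C)$ by bundling together with $C$ the strict small sides $A \setminus B$ of all $(A,B) \in \sigma_t$ that meet $C$. Separability of $b$ gives $\abs{N_G(C)} < k$ by \cref{lem:CharacterizationOfSeparableBlocks}, and tightness of each $(A,B)$ forces $N_G(A \setminus B) \subseteq V(C) \cup N_G(C)$, which is what makes $(X_C,Y_C)$ a separation at all and makes the bundles for distinct components disjoint; the resulting family, together with the members of $\sigma_t$ whose separators lie inside $b$, is a star $\rho$ with $\sigma_t \leq \rho$ and $\interior(\rho) = b$. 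Since $\rho$ is defined purely in terms of $b$ and $\sigma_t$, it commutes with isomorphisms, which yields canonicity without any separate stabiliser argument. Note also that the paper then replaces the part by a star-shaped \td\ with central bag $b$ and leaf bags $A \cap V_t$ for $(A,B) \in \rho$, working with separations of $G$ throughout; your plan to decompose the \emph{torso} of $V_t$ would additionally require translating torso separations back into nested separations of $G$, a step your sketch does not address.
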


\noindent See \cref{sec:prelims} for a definition of `robust'.
\medskip

In this paper we give a short proof of \cref{thm:DisplayingkBlocksFinitek,thm:DisplayingkBlocksFiniteMax} by showing the following more general result, which allows us to decompose the parts of a \emph{given} \td\ further, so that the resulting \td\ displays the structure of the blocks:

\begin{mainresult}\label{thm:DisplayingBlocksRefining}
    Let $G$ be any graph, and let $\cB$ a set of separable blocks in $G$. Suppose that $G$ has a tight \td\ $(\tilde{T}, \tilde{\cV})$ that distinguishes all the blocks in $\cB$. Then there exists a \td\ $(T, \cV)$ that refines~$(\tilde{T}, \tilde{\cV})$ and is such that every block in $\cB$ is equal to the unique bag of $(T, \cV)$ that contains it. Moreover, $(T, \cV)$ is canonical if $\cB$ and $(\tilde{T}, \tilde{\cV})$ are canonical.
\end{mainresult}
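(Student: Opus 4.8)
The plan is to refine $(\tilde T,\tilde{\cV})$ part by part: inside each part I build a small \td\ that isolates the block living there, and then glue these local \tds\ together along $\tilde T$.

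\emph{Step 1 (localisation).} First I would use the hypothesis that $(\tilde T,\tilde{\cV})$ distinguishes all the blocks in $\cB$ to show that each $b\in\cB$ is contained in a unique part $\tilde V_{t(b)}$, and that distinct blocks lie in distinct parts. The latter is immediate: if two distinct blocks $b\ne b'$ both lay in a part $\tilde V_t$, then no separation induced by an edge of $\tilde T$ could separate them, since $\tilde V_t$ lies entirely on one side of each such separation --- contradicting that $(\tilde T,\tilde{\cV})$ distinguishes them. Thus it suffices to isolate, inside each part, the at most one block it contains; parts containing no block are left untouched, as a one-node local \td.

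\emph{Step 2 (local isolation).} Fix a part $P=\tilde V_t$ containing a block $b$, and pass to its \emph{torso} $H=H_t$, namely $G[P]$ together with a clique on each adhesion set $P\cap\tilde V_s$ of $(\tilde T,\tilde{\cV})$ at $t$. I would then isolate $b$ inside $H$ by a canonical \td\ whose central bag is exactly $b$. The separations to use are the tight block-separations of $H$: the separations $\{A,B\}$ of order below $\abs{b}$ with $b\subseteq B$. By maximality of the block, every vertex $v\in P\setminus b$ can be separated from some vertex of $b$ by fewer than $\abs{b}$ vertices, which yields such a separation with $v\notin B$; hence these separations \emph{surround} $b$, in the sense that the intersection of their large sides is exactly $b$. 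The heart of this step is to select a canonical \emph{nested} subfamily of these separations that still surrounds $b$, so that it induces a \td\ $(S_t,\cW_t)$ of $H$ with block bag $b$. Because we work in the torso, every adhesion clique of $(\tilde T,\tilde{\cV})$ at $t$ automatically ends up inside a single bag of this local \td.

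\emph{Step 3 (gluing and canonicity).} With a local \td\ $(S_t,\cW_t)$ of each torso $H_t$ in hand, each having every adhesion set of $(\tilde T,\tilde{\cV})$ at $t$ inside one of its bags, I would combine them into a single \td\ $(T,\cV)$ by the standard composition of \tds: take the disjoint union of the trees $S_t$ and, for each edge $ts$ of $\tilde T$, join the bag of $S_t$ containing $P\cap\tilde V_s$ to the bag of $S_s$ containing the same set. By construction $(T,\cV)$ refines $(\tilde T,\tilde{\cV})$, and the block bag $b$ is not enlarged by the gluing --- the adhesion cliques are placed in branch bags and meet $b$ only inside $b$ --- so every $b\in\cB$ equals the unique bag of $(T,\cV)$ containing it. Finally, if $\cB$ and $(\tilde T,\tilde{\cV})$ are invariant under $\Aut(G)$, then so is the assignment $b\mapsto t(b)$, the passage to torsos and the canonical nested families of Step 2, and hence the glued \td.

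\emph{Main obstacle.} The crux is Step 2: producing, \emph{canonically}, a nested family of tight block-separations that still surrounds $b$. Nestedness fails for all block-separations at once, so one must isolate the right extremal/tight ones and verify that they do not cross; this is precisely where the tightness of $(\tilde T,\tilde{\cV})$ is needed, to guarantee that the local separations are compatible with the given ones rather than crossing the adhesion sets. For infinite (in particular locally finite) $G$ there is the further subtlety that such a nested family must be shown to induce an honest \td, for which the uniformly bounded order $<\abs{b}$ of the separations around the fixed block is the key finiteness input.
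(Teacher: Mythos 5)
Your architecture matches the paper's (refine part by part, isolate the block by a local decomposition centred on it, glue back along $\tilde T$), but the step you yourself flag as the crux --- producing a \emph{canonical nested} family of separations of order less than $k$ whose big sides intersect exactly in $b$, compatible with the star $\sigma_t$ of separations already induced at $t$ --- is exactly the mathematical content of the result, and your proposal does not supply it. Taking ``all tight block-separations of order $<k$ around $b$'' (note: order $<k$, not $<\abs{b}$) and hoping to extract a nested subfamily is not a proof: that family is wildly non-nested, and no canonical extremal choice among separations indexed by the vertices $v\notin b$ will work. Crucially, you never use the hypothesis that the blocks in $\cB$ are \emph{separable}, without which the statement is false --- a non-separable $k$-block surrounds itself with order-$<k$ separations vertex by vertex, yet no star of such separations has interior $b$.

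The missing idea is to index the separations by the \emph{components $C$ of $G-b$} rather than by vertices: separability gives, via \cref{lem:CharacterizationOfSeparableBlocks}, that $\abs{N_G(C)}<k$ for each such $C$, and the separations $(X_C,Y_C)$ with separator $N_G(C)$ are automatically nested because distinct components are disjoint --- so no extremal selection argument is needed at all. Compatibility with $\sigma_t$ is then arranged by absorbing the small sides of those $(A,B)\in\sigma_t$ whose separator meets $C$ into $X_C$; the tightness of $(A,B)$ is used precisely here, to show that the component of $G-(A\cap B)$ witnessing tightness on the small side lies inside $C$, so that this absorption does not enlarge the separator beyond $N_G(C)$ and so that distinct components do not share such an $(A,B)$. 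Your detour through torsos is also unnecessary (and would require extra work to relate blocks and separations of the torso to those of $G$); the paper's construction stays in $G$ throughout. The claimed finiteness subtlety for infinite graphs does not arise: the local decomposition is a single star with centre bag $b$, not a limit of an increasing family.
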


For the proof of \cref{thm:DisplayingkBlocksFinitek} Carmesin and Gollin gave one particular algorithm to construct a canonical \td\ which distinguishes all $k$-profiles efficiently and which displays all separable $k$-blocks.
However, there are a number of different algorithms to construct canonical \tds\ that distinguish all the $k$-profiles in a graph \cites{CDHH13CanonicalAlg, carmesin2022entanglements, FiniteSplinters}. By \cref{thm:DisplayingBlocksRefining}, we can now choose whichever algorithm we~like\penalty-200\ to~construct an initial \td, perhaps in order to have some control over the structure of those parts that do not contain any blocks, and we can still conclude that the \td\ extends to one which additionally displays all separable $k$-blocks.

Moreover, \cref{thm:DisplayingBlocksRefining} also applies to infinite graphs. Carmesin, Hamann and Miraftab \cite{CanonicalTreesofTDs} and Elbracht, Kneip and Teegen \cite{InfiniteSplinters} showed that every locally finite graph has a canonical \td\ that distinguishes all its $k$-profiles. Moreover, Jacobs and Knappe \cite{PRToTsInLocFinGraphs} showed that every locally finite graph without half-grid minor has a canonical \td\ that distinguishes all its maximal robust profiles. Applying \cref{thm:DisplayingBlocksRefining} to these \tds\ yields the following generalizations of \cref{thm:DisplayingkBlocksFinitek,thm:DisplayingkBlocksFiniteMax}:

\begin{mainresult}\label{thm:DisplayingkBlocksLocFink}
    Every locally finite graph $G$ has a canonical \td\ $(T, \cV)$ of adhesion less than~$k$ that efficiently distinguishes every two distinct $k$-profiles, and which has the further property that every separable $k$-block is equal to the unique bag of $(T, \cV)$ that contains it.
\end{mainresult}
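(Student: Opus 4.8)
The plan is to deduce \cref{thm:DisplayingkBlocksLocFink} from \cref{thm:DisplayingBlocksRefining} by feeding the latter a suitable starting \td. By the results of Carmesin, Hamann and Miraftab \cite{CanonicalTreesofTDs} and of Elbracht, Kneip and Teegen \cite{InfiniteSplinters}, every locally finite graph $G$ admits a canonical \td\ $(\tilde T, \tilde\cV)$ of adhesion less than $k$ that (efficiently) distinguishes every two distinct $k$-profiles. The first task is to arrange that $(\tilde T, \tilde\cV)$ is \emph{tight}, since this is the hypothesis under which \cref{thm:DisplayingBlocksRefining} applies. Here I would either verify that the cited constructions already produce tight \tds, or else pass from $(\tilde T, \tilde\cV)$ to a tight \td\ that uses the same distinguishing separations of minimum order, taking care that this tightening step preserves canonicity.

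Next, I would let $\cB$ be the set of all separable $k$-blocks of $G$. Since being a separable $k$-block is a purely graph-theoretic property, $\cB$ is invariant under $\Aut(G)$ and hence canonical. Moreover, each $k$-block in $\cB$ induces its own $k$-profile, and distinct $k$-blocks induce distinct $k$-profiles; as $(\tilde T, \tilde\cV)$ distinguishes all $k$-profiles, it in particular distinguishes all the blocks in $\cB$. With $\cB$ and $(\tilde T, \tilde\cV)$ both canonical, the hypotheses of \cref{thm:DisplayingBlocksRefining} are met, and applying it yields a canonical \td\ $(T, \cV)$ that refines $(\tilde T, \tilde\cV)$ and in which every separable $k$-block equals the unique bag containing it.

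It then remains to check that $(T, \cV)$ inherits the two quantitative properties demanded by the statement. Because $(T, \cV)$ refines $(\tilde T, \tilde\cV)$, every separation induced by an edge of $\tilde T$ still appears among the separations of $(T, \cV)$; in particular the minimum-order separations that $(\tilde T, \tilde\cV)$ uses to distinguish the $k$-profiles survive, so $(T, \cV)$ still efficiently distinguishes every two distinct $k$-profiles. For the adhesion bound I would argue that the separations introduced when refining a part in order to isolate a block have order less than $k$: a $k$-block is cut off from the remainder of its part exactly by the separators oriented towards it by its $k$-profile, all of which have order less than $k$. Hence $(T, \cV)$ has adhesion less than $k$, completing the deduction.

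The main obstacle I anticipate is the tightness requirement: the cited distinguishing \tds\ are not \emph{a priori} tight, so one must either inspect the underlying constructions or supply a canonical tightening step that neither destroys the efficient-distinguishing property nor raises the adhesion above $k-1$. A secondary point needing care is to confirm that the refinement produced by \cref{thm:DisplayingBlocksRefining} genuinely keeps the adhesion below $k$, since that theorem is phrased in terms of displaying the blocks of $\cB$ and does not record the adhesion bound explicitly; one must verify that the further \tds\ decomposing the parts introduce only separations of order less than $k$.
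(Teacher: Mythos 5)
Your deduction follows the same route as the paper: take the canonical \td\ of \cites{CanonicalTreesofTDs,InfiniteSplinters} that efficiently distinguishes all $k$-profiles, let $\cB$ be the set of all separable $k$-blocks, and apply \cref{thm:DisplayingBlocksRefining}. The one step you explicitly leave open --- tightness of $(\tilde T, \tilde\cV)$ --- is the only real gap, and it needs neither an inspection of the cited constructions nor a separate canonical ``tightening'' step: every edge of the \td\ obtained there induces a separation of order less than $k$ that \emph{efficiently} distinguishes two distinct regular $k$-profiles, and by \cref{lem:EffDistinguisherAreTight} any separation of finite order with that property is automatically tight. So tightness is immediate from the efficiency you already have, and your proposed fallback of constructing a new tight \td\ is unnecessary (and would reintroduce exactly the canonicity worries you mention). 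Your two secondary concerns are already handled by the paper's machinery: the stars $\rho_b^\sigma$ produced by \cref{lem:StarWhoseInteriorIsABlock} are contained in $\vS_k(G)$, so the refinement only adds separations of order less than $k$ and the adhesion bound is preserved; and since refining keeps every separation induced by an edge of $\tilde T$, the efficient distinguishing of $k$-profiles survives, as you say.
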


\begin{mainresult}\label{thm:DisplayingkBlocksLocFinMax}
    Every locally finite graph $G$ without half-grid minor has a canonical \td\ $(T, \cV)$ that efficiently distinguishes every two distinct maximal robust profiles, and which has the further property that every separable block inducing a maximal robust profile is equal to the unique bag of $(T, \cV)$ that contains it.
\end{mainresult}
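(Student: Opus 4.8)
The plan is to derive \cref{thm:DisplayingkBlocksLocFinMax} from \cref{thm:DisplayingBlocksRefining} by feeding the latter the canonical \td\ of Jacobs and Knappe. Concretely, I would first invoke \cite{PRToTsInLocFinGraphs} to obtain a canonical \td\ $(\tilde T, \tilde \cV)$ of $G$ that efficiently distinguishes every two distinct maximal robust profiles, and arrange (see the last paragraph) that $(\tilde T, \tilde \cV)$ is tight, so that it satisfies the hypotheses of \cref{thm:DisplayingBlocksRefining}. I would then take $\cB$ to be the set of all separable blocks of $G$ that induce a maximal robust profile.

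To apply \cref{thm:DisplayingBlocksRefining} I need two things about $\cB$ and $(\tilde T, \tilde \cV)$. First, $\cB$ is canonical: being a separable block that induces a maximal robust profile is a property preserved by every automorphism of $G$, since automorphisms send separable blocks to separable blocks and maximal robust profiles to maximal robust profiles, and they commute with the correspondence that assigns to a block the profile it induces. Second, $(\tilde T, \tilde \cV)$ distinguishes all the blocks in $\cB$: distinct blocks induce distinct profiles, these profiles are maximal and robust by the choice of $\cB$, and $(\tilde T, \tilde \cV)$ distinguishes any two distinct maximal robust profiles; hence the separation witnessing this also separates the two blocks.

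With these two facts in hand, \cref{thm:DisplayingBlocksRefining} yields a \td\ $(T, \cV)$ that refines $(\tilde T, \tilde \cV)$, that is canonical because both $\cB$ and $(\tilde T, \tilde \cV)$ are, and in which every block of $\cB$ equals the unique bag containing it. It then remains to verify that $(T, \cV)$ still \emph{efficiently} distinguishes every two distinct maximal robust profiles. This is where I would use that $(T, \cV)$ refines $(\tilde T, \tilde \cV)$: every separation induced by an edge of $\tilde T$ is again induced by an edge of $T$, so in particular the efficient distinguisher of any two maximal robust profiles in $(\tilde T, \tilde \cV)$ survives in $(T, \cV)$, which therefore efficiently distinguishes those two profiles as well.

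The main obstacle is the tightness assumption. \cref{thm:DisplayingBlocksRefining} requires a tight \td\ as input, whereas the \td\ supplied by \cite{PRToTsInLocFinGraphs} need not a priori be tight. The hard part will thus be to pass from the Jacobs--Knappe \td\ to a tight one without destroying canonicity or the property of efficiently distinguishing all maximal robust profiles; I expect this to rest on a general tightening step for canonical distinguishing \tds, the very same step being needed to derive \cref{thm:DisplayingkBlocksLocFink} from \cite{CanonicalTreesofTDs, InfiniteSplinters}. The half-grid hypothesis, by contrast, enters only through the invocation of \cite{PRToTsInLocFinGraphs} and plays no further role in the argument.
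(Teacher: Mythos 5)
Your overall strategy coincides with the paper's: take the canonical \td\ $(\tilde T, \tilde\cV)$ of Jacobs and Knappe, let $\cB$ be the set of all separable blocks inducing a maximal robust profile, and apply \cref{thm:DisplayingBlocksRefining}. Your verifications that $\cB$ is canonical, that $(\tilde T, \tilde\cV)$ distinguishes the blocks in $\cB$ via their induced profiles, and that the efficient distinguishers survive the refinement (because refining only \emph{adds} edge-induced separations) are all correct and are exactly what the paper relies on, mostly tacitly.

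The one genuine gap is the tightness hypothesis, which you explicitly leave open and propose to handle by a separate ``tightening step'' applied to the Jacobs--Knappe decomposition. No such step is needed, and constructing one would risk precisely the losses you worry about (canonicity and efficiency). What you are missing is \cref{lem:EffDistinguisherAreTight}, recorded in the preliminaries: a separation of finite order that \emph{efficiently} distinguishes two distinct regular profiles is automatically tight. The decompositions supplied by \cite{InfiniteSplinters}*{Theorem 6.6} and \cite{PRToTsInLocFinGraphs}*{Theorem 5.4} have the property that each edge induces a separation efficiently distinguishing two of the maximal robust profiles they are built to separate, so $(\tilde T, \tilde\cV)$ is already tight and \cref{thm:DisplayingBlocksRefining} applies directly. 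As written, your argument does not produce a tight input decomposition; with this lemma the gap closes in one line, and the proof becomes identical to the paper's.
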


\section{Preliminaries}\label{sec:prelims}

We mainly follow the notions from \cite{DiestelBook16noEE}. In what follows, we recap some definitions which we need later.
All graphs in this paper may be infinite unless otherwise stated. Recall that a graph is \emph{locally finite} if all its vertices have finite degree.

\subsection{Separations}

Let $G$ be any graph. A \emph{separation} of $G$ is a set $\{A, B\}$ of subsets of $V(G)$ such that $A \cup B = V(G)$ and there is no edge in $G$ between $A\setminus B$ and $B \setminus A$. A separation $\{A,B\}$ of $G$ is \emph{proper} if neither $A$ nor $B$ equals $V(G)$. Moreover, $\{A,B\}$ is \emph{tight} if both $G[A\setminus B]$ and $G[B\setminus A]$ contain a component of $G - (A \cap B)$ whose neighbourhood in $G$ equals $A \cap B$. 

The \emph{orientations} of a separation~$\{A, B\}$ are the \emph{oriented separations} $(A,B)$ and $(B,A)$. We refer to~$A$ as the \emph{small side} of $(A,B)$ and to $B$ as the \emph{big side} of $(A,B)$.
If the context is clear, we will simply refer to both oriented and unoriented separations as `separations'.

The \emph{order} of a separation $\{A, B\}$ is the size $\abs{A \cap B}$ of its \emph{separator} $A \cap B$.
For some $k \in \N \cup \{\aleph_0\}$, we define $S_k(G)$ to be the set of all separations of $G$ of order $< k$ and $\vS_k(G) := \{(A,B) : \{A,B\} \in S_k(G)\}$ to be the set of all their orientations.

The oriented separations of a graph $G$ are partially ordered by
$(A, B) \leq (C,D)$ if $A \subseteq C$ and $B \supseteq D$.
A set $\sigma \subseteq \vS_{\aleph_0}(G) \setminus \{(V(G),V(G))\}$ of separations is called a \emph{star} if for any $(A,B), (C,D) \in \sigma$ it holds that $(A,B) \leq (D,C)$. 
The \emph{interior} of a star $\sigma \subseteq \vS_{\aleph_0}(G)$ is the intersection $\interior(\sigma) := \bigcap_{(A, B) \in \sigma} B$. 

The partial order on $\vS_{\aleph_0}(G)$ also relates the proper stars in $\vS_{\aleph_0}(G)$: 
if $\sigma, \tau \subseteq \vS_{\aleph_0}(G)$ are stars of proper separations, then $\sigma \leq \tau$ if and only if for every $(A,B) \in \sigma$ there exists some $(C,D) \in \tau$ such that $(A,B) \leq (C,D)$. Note that this relation is again a partial order \cite{TreeSets}.

\subsection{Profiles}

An \emph{orientation} of $S_k(G)$ is a set $O \subseteq \vS_k(G)$ which contains, for every $\{A,B\} \in S_k(G)$, exactly one of its orientations $(A,B)$ and $(B,A)$.
A subset~$O \subseteq \vS_k(G)$ is \emph{consistent} if it does not contain both $(B,A)$ and $(C,D)$ whenever $(A,B) < (C,D)$ for distinct $\{A,B\}, \{C,D\} \in S_k(G)$.

For some $k \in \N$, we call a consistent orientation $P$ of $S_k(G)$ a \emph{$k$-profile in $G$} if it satisfies that
\[
\text{for all $(A,B), (C,D) \in P$ the separation $(B \cap D, A \cup C)$ does not lie in $P$.} 
\]

A profile is \emph{regular} if it does not contain $(V(G), A)$ for any subset $A \subseteq V(G)$.
For some $n \in \N$, a profile in $G$ is \emph{$n$-robust} if for every $(A, B) \in P$ and every $\{C,D\} \in S_n(G)$ the following holds: if both $(A \cup C, B \cap D)$ and $(A \cup D, B \cap C)$ have order less than $\abs{A \cap B}$, then one of them is contained in $P$. Clearly, every $k$-profile is $k$-robust. A profile is \emph{robust} if it is $n$-robust for every $n \in \N$.

A separation $\{A,B\}$ of $G$ \emph{distinguishes} two profiles in $G$ if they orient~$\{A,B\}$ differently. $\{A,B\}$ distinguishes them \emph{efficiently} if they are not distinguished by any separation of~$G$ of smaller order. 

\begin{LEM}{\cite{InfiniteSplinters}*{Lemma 6.1}}\label{lem:EffDistinguisherAreTight}
Let $P, P'$ be two distinct regular profiles in an arbitrary graph $G$. If $\{A, B\}$ is a separation of finite order that efficiently distinguishes $P$ and $P'$, then $\{A,B\}$ is tight.
\end{LEM}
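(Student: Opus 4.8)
The plan is to argue by contradiction. Assume that $\{A,B\}$ efficiently distinguishes $P$ and $P'$ but is \emph{not} tight, and write $S := A \cap B$ for its separator, which is finite. After relabelling the two profiles I may assume $(A,B) \in P$ and $(B,A) \in P'$, and after interchanging $(A,P)$ with $(B,P')$ if necessary I may assume that it is the side $A$ which witnesses non-tightness: no component of $G - S$ contained in $A \setminus B$ has neighbourhood equal to all of $S$. Since $P'$ is regular and $(B,A) \in P'$, the big side $B$ cannot equal $V(G)$, so $A \setminus B \neq \emptyset$. Hence $A \setminus B$ is a non-empty union of components of $G - S$, each of whose neighbourhoods is a \emph{proper} subset of $S$.

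The heart of the proof is a shifting argument. For each subset $S' \subsetneq S$ occurring as the neighbourhood of some component of $G - S$ in $A \setminus B$, let $U_{S'}$ be the union of all such components with neighbourhood exactly $S'$, and put $A_{S'} := U_{S'} \cup S'$ and $B_{S'} := V(G) \setminus U_{S'}$. Then $\{A_{S'}, B_{S'}\}$ is a separation of order $\abs{S'} < \abs{S}$ satisfying $(A_{S'}, B_{S'}) \le (A,B)$, so consistency of $P$ yields $(A_{S'}, B_{S'}) \in P$; since its order is strictly smaller than that of $\{A,B\}$, efficiency forces $(A_{S'}, B_{S'}) \in P'$ as well. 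There are at most $2^{\abs{S}}$ such subsets $S'$, so only finitely many separations $(A_{S'}, B_{S'})$ arise, and I would take their join inside $P'$. Iterating the profile property of $P'$ --- legitimate since every intermediate separator is contained in $S$ and hence has order $< k$ --- produces a separation $(A', B) \in P'$ whose small side is $A' = (A \setminus B) \cup S^*$ with $S^* := \bigcup_{S'} S'$. The point is that $A' \supseteq A \setminus B$, and therefore $A' \cup B = V(G)$.

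Finally, I would apply the profile property of $P'$ once more, now to the two separations $(A', B)$ and $(B, A)$ of $P'$: it forbids the separation $(B \cap A,\, A' \cup B) = (S,\, V(G))$ from lying in $P'$. But $(S, V(G)) \le (B, A) \in P'$, so consistency of $P'$ forces $(S, V(G)) \in P'$, a contradiction. The case in which the side $B$ fails tightness follows symmetrically via the substitution indicated above (using the profile property and regularity of $P$ instead).

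I expect the main obstacle to be the possibility that $G - S$ has \emph{infinitely many} components inside $A \setminus B$, which would make a direct join over all of them illegitimate. This is exactly what forces grouping the components according to their neighbourhood in $S$: there are only finitely many possible neighbourhoods, so the join is taken over a finite family and every separator encountered stays inside $S$, keeping its order below $k$. The other point needing care is locating where non-tightness is used: it is precisely the assumption that no component of $A \setminus B$ has neighbourhood all of $S$ that guarantees each shifted separation $(A_{S'}, B_{S'})$ has order strictly less than $\abs{S}$, which is what lets efficiency transport it from $P$ into $P'$.
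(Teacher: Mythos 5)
Your argument is correct, and it is essentially the standard one: the paper itself imports this lemma from \cite{InfiniteSplinters} without proof, so there is no in-paper argument to compare against, but your proof is a sound self-contained derivation along the same lines as the cited source. The two points that needed care both check out: grouping the components of $G[A\setminus B]$ by their (proper) neighbourhoods in $S$ keeps the join over a family of size at most $2^{\abs{S}}$ with all corner separators contained in $S$, so the iterated profile property is legitimate; and the final contradiction is valid, since the profile property applied to $(A',B),(B,A)\in P'$ excludes $(S,V(G))$ from $P'$ while consistency with $(B,A)$ (or, equivalently, regularity of $P'$, which forbids $(V(G),S)\in P'$) forces it in. The only blemish is terminological: in $(B,A)\in P'$ the set $B$ is the \emph{small} side in the paper's convention, though your conclusion $B\neq V(G)$ from regularity is correct regardless.
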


\subsection{Tree-decompositions}

A \emph{tree-decomposition} of a graph $G$ is a pair $(T, \cV)$ of a tree $T$ together with a family $\cV = (V_t)_{t \in V(t)}$ of subsets of $V(G)$ such that $\bigcup_{t \in T} G[V_t] = G$, and such that for every vertex~$v \in V(G)$, the graph $T[\{t\in T : v\in V_t\}]$ is connected. We call the sets~$V_t \in \cV$ the \emph{bags} and their induced subgraphs $G[V_t]$ the \emph{parts} of this \td. The \emph{adhesion} of $(T, \cV)$ is the maximal size of a set $V_t \cap V_{t'}$ for edges $\{t, t'\} \in E(T)$.

It is well-known (see e.g.\ \cite{DiestelBook16noEE}*{Ch.\ 12.3} for a proof) that if $e = \{t_1, t_2\}$ is an edge of $T$, then $V_{t_1} \cap V_{t_2}$ separates $U_1 := \bigcup_{s \in V(T_1)} V_s$ and $U_2 := \bigcup_{s \in V(T_2)} V_s$ where $T_1 \ni t_1$ and~$T_{2} \ni t_2$ are the two components of $T-e$. We say that $e$ \emph{induces} the separation $s_{e} := \{U_1, U_2\}$ of $G$. Let further $\vs_{(t_1, t_2)} := (U_1, U_2)$ be the separation of $G$ \emph{induced} by the oriented edge $(t_1, t_2)$.

It is easy to check that the separations induced by the (inwards oriented) edges incident with a node $t \in T$ form a star. We call this star $\sigma_t := \{\vs_{(u,t)} : (u, t) \in \vE(T)\}$ the \emph{star associated with the node $t$}.

A \td\ $(T, \cV)$ is \emph{tight} if every separation induced by an edge of $T$ is tight. $(T, \cV)$ \emph{(efficiently) distinguishes} two profiles if there is an edge $e \in E(T)$ such that $s_{e}$ distinguishes them (efficiently).

A \td\ $(T, \cV)$ of $G$ is \emph{canonical} if the construction $\Psi$ of $(T, \cV)$ commutes with all isomorphisms $\phi: G \rightarrow G'$: if $\phi$ maps the bags $V_t$ of $(T, \cV)$ to bags $V'_{t'}$ of $(T', \cV') := \Psi(G')$ such that $t \mapsto t'$ is an isomorphism $T \rightarrow T'$.

If~$(T, \cV)$ and $(\tilde{T}, \tilde{\cV})$ are both \tds\ of~$G$, then $(T, \cV)$ \emph{refines} $(\tilde{T}, \tilde{\cV})$ if the set of separations induced by the edges of~$T$ is a superset of the set of separations induced by the edges of~$\tilde{T}$.

\subsection{Blocks}

For some $k \in \N$, a \emph{$k$-block} in a graph $G$ is a maximal set $b$ of at least $k$ vertices such that no two vertices $v, w \in b$ can be separated in $G$ by removing fewer than $k$ vertices other than $v,w$. A set $b \subseteq V(G)$ is a \emph{block} if it is a $k$-block for some $k \in \N$.

It is straightforward to check that every $k$-block in $G$ \emph{induces} a regular $k$-profile in $G$ by orienting $\{A,B\} \in S_k(G)$ as $(A,B)$ if and only if~$b \subseteq B$. Moreover, distinct $k$-blocks induce distinct $k$-profiles.
We say that a \td\ of $G$ \emph{(efficiently) distinguishes} two blocks in $G$ if it (efficiently)  distinguishes their induced profiles.

A $k$-block $b$ in $G$ is \emph{separable} if it is the interior of some star in $S_k(G)$, i.e.\ if there exists a star $\sigma \subseteq \vS_k(G)$ such that $\interior(\sigma) = b$. 
We need the following equivalent characterization of separable $k$-blocks:

\begin{LEM}{\cite{CG14:isolatingblocks}*{Lemma 4.1}}\label{lem:CharacterizationOfSeparableBlocks}
    Let $b$ be a $k$-block in a graph $G$. Then $b$ is separable if and only if $\abs{N_G(C)} < k$ for all components~$C$ of $G-b$.
\end{LEM}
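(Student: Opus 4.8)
The plan is to prove the two implications separately; both are elementary once the definitions are unwound, and the whole content lies in the interplay between the interior $\bigcap_{(A,B)\in\sigma} B$ of a star and the neighbourhoods of the components of $G - b$. I use throughout that every $(A,B) \in \sigma$ has $b = \bigcap_{(A,B) \in \sigma} B \subseteq B$, so $b$ always sits on the big side.

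For the backward implication I would argue by an explicit construction. Assume $\abs{N_G(C)} < k$ for every component $C$ of $G - b$. For each such component set $A_C := C \cup N_G(C)$ and $B_C := V(G) \setminus C$. Then $\{A_C, B_C\}$ is a separation: its separator is $A_C \cap B_C = N_G(C)$, which has order $< k$, and there is no edge between $A_C \setminus B_C = C$ and $B_C \setminus A_C = V(G) \setminus (C \cup N_G(C))$, since every neighbour of $C$ lies in $C \cup N_G(C)$. I claim that $\sigma := \{(A_C, B_C) : C \text{ a component of } G - b\}$ is a star in $\vS_k(G)$ with $\interior(\sigma) = b$. For the star property, note that for distinct components $C, C'$ we have $C \cap C' = \emptyset$ and $N_G(C) \subseteq b$ is disjoint from $C'$, which yields $A_C \subseteq B_{C'}$ and, symmetrically, $A_{C'} \subseteq B_C$. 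The interior is $\bigcap_C B_C = V(G) \setminus \bigcup_C C = V(G) \setminus (V(G) \setminus b) = b$, because the components of $G - b$ exhaust $V(G) \setminus b$. Hence $b = \interior(\sigma)$ is separable. (The degenerate case $b = V(G)$ is witnessed by the empty star, whose interior is $V(G)$ by convention.)

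For the forward implication, suppose $b = \interior(\sigma) = \bigcap_{(A,B) \in \sigma} B$ for some star $\sigma \subseteq \vS_k(G)$. Two structural observations drive the argument. First, the separator of every $(A,B) \in \sigma$ lies in $b$: the star property gives $A \subseteq B'$ for every other $(A', B') \in \sigma$, so $A \cap B \subseteq \bigcap_{(A',B') \in \sigma} B' = b$. Second, the ``strict small sides'' $A \setminus B$ over $(A,B) \in \sigma$ are pairwise disjoint and pairwise non-adjacent. They are disjoint because if $v$ lay in both $A \setminus B$ and $A' \setminus B'$ then $A \subseteq B'$ would force $v \in B'$, a contradiction; and any edge from $A \setminus B$ to $A' \setminus B'$ would, by the separation property of $(A', B')$ together with $A \subseteq B'$, push its endpoint in $A \setminus B$ into the separator $A' \cap B' \subseteq b \subseteq B$, contradicting membership in $A \setminus B$. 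Since every vertex outside $b$ lies in some $A \setminus B$ (as $v \notin b$ means $v \notin B$, hence $v \in A \setminus B$, for some $(A,B) \in \sigma$), the sets $A \setminus B$ partition $V(G) \setminus b$ into pairwise non-adjacent pieces. Consequently each component $C$ of $G - b$ is contained in a single $A \setminus B$. Finally, any external neighbour of $C$ lies in $b \subseteq B$ and, being adjacent to $C \subseteq A \setminus B$, cannot lie in $B \setminus A$; so it lies in $A \cap B$, whence $N_G(C) \subseteq A \cap B$ has order $< k$.

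I expect the forward direction to be the crux. The key step is establishing that each component of $G - b$ is confined to a single small side $A \setminus B$ of the star, which rests on the two facts that the separators sit inside $b$ and that the strict small sides are mutually non-adjacent. Once a component is localised to one separation, bounding $N_G(C)$ by that separation's separator — and hence by $k$ — is immediate. The backward direction, by contrast, is a routine explicit construction, the only subtleties being the verification of the star property and the empty-star edge case.
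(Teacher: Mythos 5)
Your proof is correct; note that this paper does not prove the lemma but cites it from Carmesin and Gollin, and your argument is essentially the standard (original) one: for the backward direction the star of separations $\bigl(V(C) \cup N_G(C),\, V(G) \setminus V(C)\bigr)$ over the components $C$ of $G-b$, and for the forward direction the two observations that every separator of a star with interior $b$ lies in $b$ and that the strict small sides $A \setminus B$ are pairwise disjoint and mutually non-adjacent, so that each component of $G-b$ is confined to a single one of them and hence $N_G(C) \subseteq A \cap B$. Your treatment of the degenerate case $b = V(G)$ via the empty star is a sound reading of the interior convention, and your implicit reading of the star condition as applying to \emph{distinct} pairs of separations is the intended one, matching the paper's own usage in the proof of \cref{lem:StarWhoseInteriorIsABlock}.
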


\section{Refining tree-decompositions}

In this section we prove \cref{thm:DisplayingBlocksRefining} and then derive \cref{thm:DisplayingkBlocksFinitek,thm:DisplayingkBlocksFiniteMax,thm:DisplayingkBlocksLocFink,thm:DisplayingkBlocksLocFinMax} from it. For this, we first show the following lemma. It asserts that given a part of a tight \td\ which contains a $k$-block, then we can further decompose that part in a star-like way so that the central bag of that decomposition is equal to the $k$-block:

\begin{LEM}\label{lem:StarWhoseInteriorIsABlock}
    Let $k \in \N$, and let $b$ be a separable $k$-block in a graph $G$. Further, let $\sigma \subseteq \vS_{\aleph_0}(G)$ be a star of tight separations such that $b \subseteq \interior(\sigma)$. Then there exists a star $\rho_b^\sigma \subseteq \vS_k(G)$ such that $\sigma \leq \rho_b^\sigma$ and $\interior(\rho_b^\sigma) = b$.
    Moreover, $\rho_b^\sigma$ can be chosen so that if $\phi: G \rightarrow G'$ is an isomorphism, then $\phi(\rho_b^\sigma) = \rho_{\phi(b)}^{\phi(\sigma)}$.
\end{LEM}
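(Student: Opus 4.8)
The plan is to build $\rho_b^\sigma$ out of the separations induced by the components of $G-b$. Since $b$ is separable, \cref{lem:CharacterizationOfSeparableBlocks} gives $\abs{N_G(C)} < k$ for every component $C$ of $G-b$, so each
\[
    s_C := \big(C \cup N_G(C),\ V(G) \setminus C\big)
\]
lies in $\vS_k(G)$ and has $b$ on its big side. More generally, any partition $\mathcal P$ of the components of $G-b$ into groups $P$ with $\abs{N_G(P)} < k$ yields a candidate star $\{(P \cup N_G(P),\ V(G)\setminus P) : P \in \mathcal P\}$ whose interior is $\bigcap_P (V(G)\setminus P) = V(G) \setminus \bigcup_P P = b$. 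Here I will repeatedly use that, since distinct groups are unions of components of $G-b$ and hence have no edges between them, all separators lie in $b \subseteq \interior(\sigma)$, so the star axiom reduces to the groups being pairwise disjoint. Thus the entire problem becomes: choose $\mathcal P$ so that, in addition, $\sigma \leq \rho_b^\sigma$.

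To arrange $\sigma \leq \rho_b^\sigma$, observe that $(A,B) \leq (P\cup N_G(P),\, V(G)\setminus P)$ forces the strict small side $A \setminus B$ to lie in $P$. I will therefore let $\mathcal P$ be the partition obtained by declaring two components of $G-b$ equivalent whenever they both meet the strict small side $A\setminus B$ of a common $(A,B) \in \sigma$, and passing to the transitive closure; the remaining components, which are contained in $\interior(\sigma)$, form singleton groups. By construction each $A \setminus B$ lies in one group $P$. To upgrade this to $(A,B) \leq (P\cup N_G(P),\, V(G)\setminus P)$ I invoke tightness of $(A,B)$, which supplies a component $K$ of $G - (A\cap B)$ inside $A\setminus B$ with $N_G(K) = A \cap B$; from this one checks that the separator $A\cap B$ is absorbed into $P \cup N_G(P)$.

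The main obstacle is to verify that these groups still satisfy $\abs{N_G(P)} < k$, and this is exactly where tightness of $\sigma$ is indispensable. The key point is that $K$, being connected and disjoint from $b$, lies in a single component $C^\ast$ of $G-b$, whence $A\cap B = N_G(K) \subseteq C^\ast \cup N_G(C^\ast)$. Consequently every \emph{other} component $C$ meeting $A\setminus B$ is disjoint from the separator $A\cap B$, so $C$ lies entirely in $A\setminus B$ and satisfies $N_G(C) \subseteq (A\cap B)\cap b \subseteq N_G(C^\ast)$. I then plan to show, using that $\sigma$ is a star (so the strict small sides $A\setminus B$ are pairwise disjoint), that within each equivalence class exactly one component $C^\ast$ plays this distinguished straddling role, every other member being swallowed with neighbourhood inside $N_G(C^\ast)$; the delicate step is that whenever two of the cliques coming from single separations share a component, that shared component must be the distinguished one of both, so each class has a unique such $C^\ast$. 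Hence $N_G(P) = N_G(C^\ast)$, giving $\abs{N_G(P)} < k$ by separability and $\rho_b^\sigma \subseteq \vS_k(G)$.

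Finally, the construction of $\mathcal P$—the components of $G-b$, the equivalence relation on them, and their neighbourhoods—is defined purely from $G$, $b$ and $\sigma$ with no arbitrary choice, so it commutes with isomorphisms. This immediately yields $\phi(\rho_b^\sigma) = \rho_{\phi(b)}^{\phi(\sigma)}$ for every isomorphism $\phi\colon G \to G'$, establishing the canonical form demanded by the last sentence of the statement.
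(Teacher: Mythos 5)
Your proposal is correct and is essentially the paper's own argument in different packaging: the paper directly defines, for each component $C$ of $G-b$ that meets the big side of every $(A,B)\in\sigma$, a separation $(X_C,Y_C)$ with separator $N_G(C)$ by absorbing into $C$ the strict small sides $A\setminus B$ of all $(A,B)\in\sigma$ meeting $C$ (and keeps the $(A,B)\in\sigma$ with $A\cap B\subseteq b$ unchanged), which yields exactly the same set of separations as your partition $\mathcal P$ into equivalence classes with distinguished components $C^\ast$. The two key points you identify --- that tightness places $A\cap B=N_G(\tilde C_A)$ inside $V(C^\ast)\cup N_G(C^\ast)$ for a single component $C^\ast$, and that \cref{lem:CharacterizationOfSeparableBlocks} bounds $\abs{N_G(C^\ast)}<k$ --- are precisely the engine of the paper's proof, so no further comparison is needed.
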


\begin{proof}
    Let $\cC' := \{C \in \cC(G - b) : V(C) \cap B \neq \emptyset \text{ for all } (A,B) \in \rho\}$ be the set of all components of $G-b$ that are not completely contained in the strict small side $G[A \setminus B]$ of some $(A,B) \in \sigma$. Further, for every component $C \in \cC'$, let $\sigma_C := \{(A,B) \in \sigma : A \cap V(C) \neq \emptyset\}$, and set
    \[
    (X_C, Y_C) := \bigg(V(C) \cup N_G(C) \cup \bigcup_{(A,B) \in \sigma_C} A \setminus B,\; V(G) \setminus \bigg(V(C) \cup \bigcup_{(A, B) \in \sigma_C} A \setminus B\bigg)\bigg)
    \] 
    (see \cref{fig:Sketch}). Note that $N_G(C) \subseteq b \subseteq B$ for all $(A,B) \in \sigma$, and thus $X_C \cap Y_C = N_G(C)$.
    
    \begin{figure}
        \centering
        \definecolor{dgrey}{rgb}{0.3,0.3,0.3}
\definecolor{dblue}{rgb}{0,0,0.7}
\definecolor{dgreen}{rgb}{0,0.7,0}
\definecolor{amy}{rgb}{0.6,0.4,0.8}
\scalebox{0.55}{%
%\begin{tikzpicture}
\begin{tikzpicture}[auto,rotate=270,transform shape]
%Block
\draw [line width=1.4pt] plot [smooth cycle, tension=0.3] coordinates {(7,11) (9.4,10.9) (9.5,7.5) (10,6) (9.9,5) (6,5.5) (4.6,3.7) (4,4) (3,5) (3,6) (5,8) (5,10)};
%Separation top left
\draw [dgreen, line width=1.3pt] plot [smooth, tension=0.4] coordinates {(0.75,10.5) (1.25,11.5) (2.5,12.5) (3.5,13) (4.75,13.25)};
\draw [dgreen, line width=1.3pt] plot [smooth, tension=0.4] coordinates {(0.5,10.75) (1.5,11) (2.5,11.5) (3.75,12.5) (4.5,13.5)};

%Separation top right
\draw [dgreen, dashed, line width=1.3pt] plot [smooth, tension=0.4] coordinates {(6.7,13.1) (8,12.5) (9.2, 11.5) (10,10.25) (10.3,9.2)};
\draw [dgreen, dashed, line width=1.3pt] plot [smooth, tension=0.4] coordinates {(6.9,13.4) (7.6,12) (8.5,11) (9.6,10) (10.5,9.4)};
%Separation bottom right
\draw [dgreen, dashed, line width=1.3pt] plot [smooth, tension=0.4] coordinates {(7.4,3.2) (8.2,4.5) (9,5.5) (10.5,6.5) (11.75,7.1)};
\draw [dgreen, dashed, line width=1.3pt] plot [smooth, tension=0.4] coordinates {(7.2,3.5) (9,4.2) (10,5) (10.5,5.6) (11.6,7.4)};
%Component \tilde{C}_A
\draw [line width=1.3pt] plot [smooth, tension=0.7] coordinates {(0,5.75) (1,5.85) (1.75,5.5) (2.25,5) (2.9,4) (3.15,3) (2.6,1.5) (1.5,0.5)};
\draw [line width=1.3pt] (1.3,5.79) -- (1.5,6.5);
\draw [line width=1.3pt] (1.7,5.52) -- (1.9,6);
%\draw [line width=1.3pt] (2.25,5) -- (2.5,5.25);
\draw [line width=1.3pt] (2.17,5.12) -- (2.45,5.4);
\draw [line width=1.3pt] (2.58,4.6) -- (2.9,4.8);
%\draw [line width=1.3pt] (2.8,4.15) -- (3.25,4.25);
\draw [line width=1.3pt] (2.8,4.15) -- (3.2,4.35);
%\draw [line width=1.3pt] (3.1,3.6) -- (3.75,3.75);
\draw [line width=1.3pt] (3.1,3.6) -- (3.8,3.8);
%Non-tight Component in A
\draw [line width=1.3pt] plot [smooth, tension=0.6] coordinates {(3,1) (3.5,2) (4.1,2.4) (4.6,2) (4.75,1)};
%\draw [line width=1.3pt] (3.95,2.35) -- (3.8,3.4);
\draw [line width=1.3pt] (3.8,2.3) -- (3.8,3.4);
\draw [line width=1.3pt] (4.1,2.4) -- (4.25,3.3);
\draw [line width=1.3pt] (4.35,2.3) -- (4.7,3.2);
%Component C
\draw [blue, line width=1.5pt] plot [smooth, tension=0.5] coordinates {(3.85,14) (4.45,13) (4.55,12) (4,10.75) (3.25,9.5) (2.75,8) (2.5,6.5) (2.8,5) (3.45,4) (3.5,2.5) (2,0)}; %{(3.85,14) (4.45,13) (4.55,12) (4,10.75) (3.25,9.5) (2.75,8) (2.6,6.5) (3,5) (3.5,4) (3.5,2.5) (2,0)};
%\draw [blue, line width=1.3pt] (4.55,12) -- (5.3,10.8);
\draw [blue, line width=1.3pt] (4.55,12) -- (5.5,11);
\draw [blue, line width=1.3pt] (4,10.75) -- (4.8,10.25);
\draw [blue, line width=1.3pt] (3.25,9.5) -- (4.5,9);
%\draw [blue, line width=1.3pt] (2.75,8) -- (4,7.5);
\draw [blue, line width=1.3pt] (2.75,8) -- (4.15,7.75);
%\draw [blue, line width=1.3pt] (2.6,6.5) -- (3.5,6.5);
\draw [blue, line width=1.3pt] (2.6,6.5) -- (3.3,6.5);
%Neighbourhood of C
\draw [dblue, line width=1.5pt, name path=two] plot [smooth, tension=0.5] coordinates {(6,10.63) (5.7,9) (5.5,7.5) (4.4,6) (4,5) (4.8,3.5)};

%Fill N(C)
\draw [line width=0.1pt, name path=one] plot [smooth, tension=0.3] coordinates {(6,10.63) (5,10) (5,8) (3,6) (3,5) (4,4) (4.8,3.5)};
\tikzfillbetween[of=one and two, on layer=ft] {pattern color=amy, pattern=north west lines};
%\tikzfillbetween[of=one and two] {color=lblue};

%X_C
\draw [purple, line width=1.5pt] plot [smooth, tension=0.5] coordinates {(4.5,14) (5,13.4) (5.65,12) (6,10.63) (5.7,9) (5.5,7.5) (4.4,6) (4,5) (4.8,3.5) (5,3) (5.2,1.5) (5,0)};
%Y_C
\draw [purple, line width=1.5pt] plot [smooth, tension=0.5] coordinates {(6.5,14) (6,12.5) (6,10.63) (5,10) (5,8) (3,6) (3,5) (4,4) (4.8,3.5) (5.7,2) (7,0.5)};

%Separation bottom left
\draw [dgreen, line width=1.5pt] plot [smooth, tension=0.4] coordinates {(1.1,7.6) (2.25,7) (3.25,6) (4,5) (5,3)};
\draw [dgreen, line width=1.5pt] plot [smooth, tension=0.4] coordinates {(1.4,7.9) (2,6.5) (3,5) (4,4) (5.2,3.2)};

%Component C'
\draw [blue, line width=1.5pt] plot [smooth, tension=0.6] coordinates {(10.4,3) (10,4) (10.1,4.65) (11,4.75) (11.75,4.2)};
\draw [blue, line width=1.3pt] (9.95,4.25) -- (9.5,4.4);
\draw [blue, line width=1.3pt] (10.1,4.65) -- (9.95,4.75);
\draw [blue, line width=1.3pt] (10.4,4.75) -- (10.35,5.2);
\draw (11.4,-0.4) node[rotate=90, blue, anchor=south west] {\huge $C' \in \cC(G-b)\setminus \cC'$};

%Names
\draw (1.5,3) node[rotate=90, anchor=south west] {\huge $\tilde{C}_A$};
\draw (1.5,8.5) node[rotate=90, color=blue, anchor=north west] {\huge $C \in \cC'$};
\draw (6.8,6) node[rotate=90, color=amy, anchor=south west] {\huge $N(C) = X_C \cap Y_C$};
\draw (8.5,8.5) node[rotate=90, anchor=south west] {\huge $b$};
\draw (4.7,0.5) node[rotate=90, color=purple, anchor=south east] {\huge $X_C$};
\draw (7.5,1.9) node[rotate=90, color=purple, anchor=south east] {\huge $Y_C$};
\draw (11.5,8) node[rotate=90, color=dgreen, anchor=south west] {\huge $\sigma$};
\draw (0.9,9) node[rotate=90, color=dgreen, anchor=south east] {\LARGE $(A,B) \in \sigma_C$};
%\draw (6.7,12.2) node[rotate=90, color=dgreen, anchor=south west] {\LARGE $(A',B') \not\in \sigma_C$};
\draw (11.1,9.9) node[rotate=90, color=dgreen, anchor=south west] {\LARGE $(A',B') \not\in \sigma_C$};
%\end{tikzpicture}
%Old Names
%\draw (1,2.5) node[color=dgrey, anchor=south west] {\huge $\tilde{C}_A$};
%\draw (1,9.5) node[color=blue, anchor=south west] {\huge $C \in \cC'$};
%\draw (5.5,7) node[color=amy, anchor=south west] {\huge $N(C)$};
%\draw (7.5,9) node[anchor=south west] {\huge $b$};
%\draw (5,0) node[color=dblue, anchor=south east] {\huge $X_C$};
%\draw (10.5,8) node[color=dgreen, anchor=south west] {\huge $\sigma$};
%\draw (2.7,8) node[color=dgreen, anchor=south east] {\LARGE $(A,B) \in \sigma_C$};
%\draw (7.8,12.7) node[color=dgreen, anchor=south west] {\LARGE $(A',B') \not\in \sigma_C$};
\end{tikzpicture}
}%
        %\vspace{-5mm}
        \caption{A component $C \in \cC'$ and the arising separation $(X_C,Y_C)$. The separations $(A,B) \in \sigma_C$ are indicated with solid lines, the separations $(A', B') \in \sigma \setminus \sigma_C$ are indicated with dashed lines. The component $\tilde{C}_A$ of $G - (A \cap B)$ is contained in $C$.}
        \label{fig:Sketch}
    \end{figure}
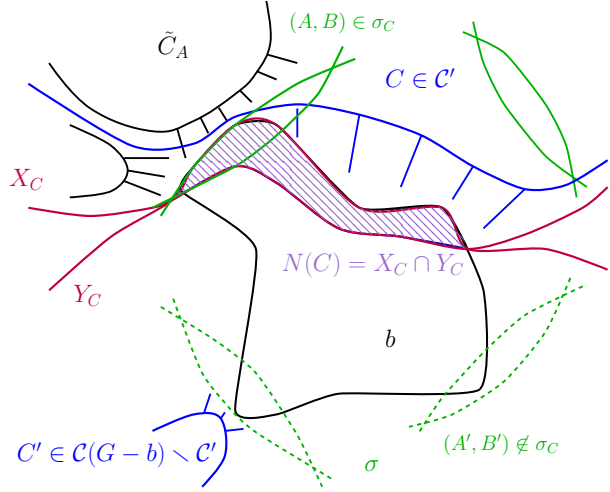
     
    Let us first show that $\{X_C, Y_C\}$ is a separation of $G$. Clearly, its sides cover $V(G)$, so it remains to prove that $N_G(X_C\setminus Y_C) \subseteq X_C$. 
    By the definition of $\{X_C, Y_C\}$, this is the case if
    $N_G(A\setminus B) \subseteq V(C) \cup N_G(C)$ for all $(A,B) \in \sigma_C$.
    So let $(A,B) \in \sigma_C$ be given. Then $V(C) \cap A \neq \emptyset$, and moreover $V(C) \cap B \neq \emptyset$ because $C \in \cC'$, which implies that $V(C) \cap (A \cap B) \neq \emptyset$ as~$C$ is connected. Since $\{A,B\}$ is tight, there is a component $\tilde{C}_A \subseteq G[A\setminus B]$ of $G - (A \cap B)$ such that $N_G(\tilde{C}_A) = A \cap B$. 
    In particular, there is an edge between~$\tilde{C}_A$ and~$C$. As~$C$ is a component of $G - b$ and $\tilde{C}_A$ is connected and disjoint from~$b$ by assumption, this implies that $\tilde{C}_A \subseteq C$.
    Hence, $N_G(A \setminus B) = A \cap B = N_G(\tilde{C}_A) \subseteq N_G(C) \cup V(C)$, and thus $\{X_C,Y_C\}$ is a separation of~$G$. 
    In particular,~$\{X_C,Y_C\}$ has order $\abs{N_G(C)}$. 
    
    Since $\abs{N_G(C)} < k$ by \cref{lem:CharacterizationOfSeparableBlocks}, this implies that $\{X_C, Y_C\} \in S_k(G)$.
    Now set
    \[
    \rho_b^\sigma := \{(X_C,Y_C) : C \in \cC'\} \cup \{(A,B) \in \sigma : A \cap B \subseteq b\}.
    \]
    Since every $(A,B) \in \sigma$ is tight, \cref{lem:CharacterizationOfSeparableBlocks} implies that every $(A,B) \in \sigma$ with $A \cap B \subseteq b$ has order less than~$k$.
    As also every $\{X_C,Y_C\}$ for $C \in \cC'$ is of order less than $k$ by the argument above, it follows that $\rho_b^\sigma \subseteq \vS_k(G)$. 
    We claim that $\rho_b^\sigma$ is as desired.
    
    First, we prove that $\rho_b^\sigma$ is a star. For this, we show that $(X_C,Y_C) \leq (Y_{C'}, X_{C'})$ for distinct $C \neq C' \in \cC'$ and that $(X_C, Y_C) \leq (B,A)$ for all $C \in \cC'$ and $(A,B) \in \sigma$ with $A \cap B \subseteq b$. Since $\sigma$ is a star itself, this then concludes the proof that $\rho_b^\sigma$ is a star.
    
    We first show the former. To this end, let two distinct components $C \neq C' \in \cC'$ be given. Then they cannot both meet the small side~$A$ of the same separation $(A, B) \in \sigma$, as otherwise $\tilde{C}_A \subseteq C \cap C'$ by the argument above, and then $C = C'$.
    Therefore, $\sigma_C \cap \sigma_{C'} = \emptyset$, and thus $X_C \setminus Y_C = V(C) \cup \bigcup_{(A,B) \in \sigma_C} A \setminus B$ and $X_{C'} \setminus Y_{C'} = V(C') \cup \bigcup_{(A,B) \in \sigma_{C'}} A \setminus B$ are disjoint. %Note that $N_G(X_C \setminus Y_C) = N_G(C) \subseteq b \subseteq Y_{C'}$ and hence $X_C \subseteq Y_{C'}
    Hence $(X_C, Y_C) \leq (Y_{C'}, X_{C'})$. 
    
    Now let $C \in \cC'$ and $(A,B) \in \sigma$ with $A \cap B \subseteq b$ be given. Then $V(C) \cap B \neq \emptyset$ by the definition of $\cC'$, which implies that $V(C) \subseteq B \setminus A$ as $C$ is connected and avoids $b \supseteq A \cap B$. Thus, $(X_C, Y_C) \leq (B,A)$. 
    
    Second, we show that $\sigma \leq \rho_b^\sigma$. For this, let $(A,B) \in \sigma$ be given. We need to find a separation $(A', B') \in \rho_b^\sigma$ with $(A,B) \leq (A', B')$. If $A \cap B \subseteq b$, then $(A,B) \in \rho_b^\sigma$ is as desired. Otherwise, $A \cap B$ meets a component~$C$ of $G - b$, which then has to lie in $\cC'$. In particular, $(A,B) \in \sigma_C$ and $(A,B) \leq (X_C, Y_C)$ by the definition of $\{X_C, Y_C\}$.
    Since $(X_C, Y_C) \in \rho_b^\sigma$ as $C \in \cC'$, this completes the proof that $\sigma \leq \rho_b^\sigma$.

    Next, we show that $\interior(\rho_b^\sigma) = b$. For this, we first observe that $b$ is disjoint from every component~$C$ of~$G - b$ and from every strict small side $A \setminus B$ of every $(A,B) \in \sigma$ by assumption. By the definition of~$\rho_b^\sigma$, this implies that $b \subseteq \interior(\rho_b^\sigma)$. 
    Moreover, every vertex $v \in V(G) \setminus b$ is contained in a component~$C$ of $G - b$. If $C \in \cC'$, then $v$ lies in the strict small side $X_C \setminus Y_C$ of $(X_C, Y_C)$ by definition, and hence $v \notin \interior(\rho_b^\sigma)$. Otherwise, there is a separation $(A,B) \in \sigma$ such that $v \in V(C) \subseteq A \setminus B$. Since $\sigma \leq \rho_b^\sigma$ as show earlier, this implies that $v \notin \interior(\rho_b^\sigma)$.
    Therefore, $\interior(\rho_b^\sigma) \subseteq b$.

    We are left to show the `moreover'-part. For this, let $\phi: G \rightarrow G'$ be an isomorphism. We show that $(\phi(X_C), \phi(Y_C)) = (X_{\phi(C)}, Y_{\phi(C)})$, which clearly implies the assertion. For this, note that $\phi(b)$ is a $k$-block in $G'$, that $\phi(C)$ is a component of $G'-\phi(b)$, and that $\phi(\sigma)$ is a star of tight separations in $\vS_k(G')$ with $\phi(b) \subseteq \interior(\phi(\sigma))$. Thus, $\{X_{\phi(C)}, Y_{\phi(C)}\}$ is defined. Moreover, if $V(C) \cap A \neq \emptyset$ for some $(A,B) \in \sigma$, then $\phi(V(C)) \cap \phi(A) \neq \emptyset$. Hence, $\phi(\sigma)_{\phi(C)} = \phi(\sigma_C)$, which, by the definition of $\{X_C, Y_C\}$ and because $\phi$ is an isomorphism, implies that $(\phi(X_C), \phi(Y_C)) = (X_{\phi(C)}, Y_{\phi(C)})$.
\end{proof}

We can now prove \cref{thm:DisplayingBlocksRefining}.

\begin{proof}[Proof of \cref{thm:DisplayingBlocksRefining}]
    Applying \cref{lem:StarWhoseInteriorIsABlock} to every star $\sigma_t$ that is associated with a node $t \in \tilde{T}$ such that~$\tilde{V}_t$ contains a block $b$ in $\cB$ yields stars $\rho_b^{\sigma_t}$ with $\sigma_t \leq \rho_b^{\sigma_t}$ and $\interior(\rho_b^{\sigma_t}) = b$. 

    We now construct the desired \td\ $(T, \cV)$. For this, we first define \tds\ $(T^t, \cV^t)$ of the parts $G[\tilde{V}_t]$ as follows. If $\tilde{V}_t$ does not contain a block from $\cB$, then we set $T^t := (\{t\}, \emptyset)$ and $V^t_t := \tilde{V}_t$. Otherwise, if $b$ is the (unique) block from $\cB$ that is contained in $\tilde{V}_t$, then we let $T^t$ be the star with centre $t$ and with $|\rho_b^{\sigma_t}|$ many leaves $u_{(A,B)}$, one for each $(A,B) \in \rho_b^{\sigma_t}$. 
    Further, we set $V^t_t := b = \interior(\rho_b^{\sigma_t})$ and $V^t_{u_{(A,B)}} := A \cap \tilde{V}_t$ for all $(A,B) \in \rho_b^{\sigma_t}$. It is straightforward to check that $(T^t, \cV^t)$ is a \td\ of~$G[V_t]$. 
    
    We then let $T$ be the tree obtained from the disjoint union over the trees $T^t$ by adding for every edge $\{t_1,t_2\} \in \tilde{T}$ the edge $\{v_1, v_2\}$ where $v_i = t_i$ if $T^{t_i} = (\{t_i\}, \emptyset)$ and $v_i := u_{(A,B)}$ where $(A, B) \in \rho^{\sigma_{t_i}}_b$ is the unique separation with $\vs_{(t_{3-i}, t_i)} = (U_{3-i}, U_i) \leq (A,B)$ otherwise. Note that such a separation exists because $(U_{3-i}, U_i) \in \sigma_{t_i} \leq \rho_b^{\sigma_{t_i}}$. Further, we set $V_s := V^t_s$ for all $s \in T$ where $t$ is the unique node of~$\tilde{T}$ such that $s \in T^t$. It is straightforward to check that $(T, \cV)$ is a \td\ of $G$. Moreover, by construction, for every edge $\{t_1, t_2\} \in \tilde{T}$, the edge $\{v_1, v_2\}$ of $T$ induces the same separation of $G$, i.e.\ $s_{(t_1, t_2)} = s_{(v_1, v_2)}$, so $(T, \cV)$ refines~$(\tilde{T}, \tilde{\cV})$. 
    Finally, by the `moreover'-part of \cref{lem:StarWhoseInteriorIsABlock}, $(T, \cV)$ is canonical if $(\tilde{T}, \tilde{\cV})$ is canonical.
    Hence, $(T, \cV)$ is as desired.
\end{proof}

\begin{proof}[Proof of \cref{thm:DisplayingkBlocksLocFink}]
    By \cites{CanonicalTreesofTDs,InfiniteSplinters}, $G$ admits a canonical \td\ $(\tilde{T}, \tilde{\cV})$ that efficiently distinguishes all the $k$-profiles in $G$.\footnote{In \cite{CanonicalTreesofTDs}*{Theorem 7.3} Carmesin, Hamann and Miraftab prove that $G$ admits a canonical \td\ that distinguishes efficiently all its \emph{robust} $k$-profiles. In \cite{InfiniteSplinters}*{Theorem 6.6} Elbracht, Kneip and Teegen give an independent proof of this result. Here, it can be seen from the proof that one may omit `robust', as the authors remark in the preliminary section.} In particular, $(\tilde{T}, \tilde{\cV})$ is tight by \cref{lem:EffDistinguisherAreTight}. Moreover, since every $k$-block induces a $k$-profile, and since distinct $k$-blocks induce distinct $k$-profiles, $(\tilde{T}, \tilde{\cV})$ distinguishes all $k$-blocks in $G$. Apply \cref{thm:DisplayingBlocksRefining} to $(\tilde{T}, \tilde{\cV})$ and the set~$\cB$ of all separable $k$-blocks in $G$.
\end{proof}

\begin{proof}[Proof of \cref{thm:DisplayingkBlocksLocFinMax}]
    By \cite{InfiniteSplinters}*{Theorem 6.6} and \cite{PRToTsInLocFinGraphs}*{Theorem~5.4}\footnote{See also \cite{PRToTsInLocFinGraphs}*{Theorem 1 and the comment after the proof of Theorem 1}.}, $G$ admits a canonical \td\ $(\tilde{T}, \tilde{\cV})$ that efficiently distinguishes all its maximal robust profiles. In particular, $(\tilde{T}, \tilde{\cV})$ is tight by \cref{lem:EffDistinguisherAreTight}. Apply \cref{thm:DisplayingBlocksRefining} to~$(\tilde{T}, \tilde{\cV})$ and the set $\cB$ of all separable blocks in $G$ that induce a maximal robust profile.
\end{proof}

\begin{proof}[Proof of \cref{thm:DisplayingkBlocksFinitek}]
    Apply \cref{thm:DisplayingkBlocksLocFink}.
\end{proof}

\begin{proof}[Proof of \cref{thm:DisplayingkBlocksFiniteMax}]
    Apply \cref{thm:DisplayingkBlocksLocFinMax}.
\end{proof}

\bibliographystyle{amsplain}
\bibliography{local.bib}

\end{document}